\newtheorem{theorem}{Theorem}[section]
\newtheorem{lemma}[theorem]{Lemma}
\newtheorem{proposition}[theorem]{Proposition}
\theoremstyle{definition}
\theoremstyle{remark}
\newtheorem{remark}[theorem]{Remark}
\newcommand\pf{\begin{proof}}
\newcommand\epf{\end{proof}}
\newcommand\lHlC{{_H^C\mathcal M}}
\newcommand\lA{{_A\mathcal M}}
\newcommand\lH{{_H\mathcal M}}
\numberwithin{equation}{section}
\title{Faithful flatness of Hopf algebras over coideal subalgebras with a bimodule conditional expectation}
\author{Julien Bichon}
\address{ Universit\'e Clermont Auvergne, CNRS, LMBP, F-63000 CLERMONT-FERRAND, FRANCE}
\email{julien.bichon@uca.fr}
\subjclass[2010]{16T05, 16D40}
\begin{document}

%\selectlanguage{english}
\begin{abstract}
We give a direct and self-contained proof that if  $H$ is a Hopf algebra and  $A\subset H$ is a right coideal subalgebra such $A$ is a direct summand in $H$ as an $A$-bimodule, then $H$ is faithfully flat as a left and right $A$-module. 
\end{abstract}

\maketitle

\section{Introduction}

The aim of this note is to give a direct and self-contained proof of the following result.

\begin{theorem}\label{thm:ffce}
	Let $H$ be a Hopf algebra and let $A\subset H$ be a right coideal subalgebra. If $A$ is a direct summand in $H$ as an $A$-bimodule, then $H$ is faithfully flat as a left and right $A$-module. 
\end{theorem}

The original author's interest for Theorem \ref{thm:ffce} came from A. Chirvasitu's result \cite[Theorem 2.1]{chi14} on the faithful flatness of a cosemisimple Hopf algebras over its Hopf subalgebras.  Indeed, Chirvasitu's proof is divided in two steps: he first proves the crucial fact that a Hopf subalgebra $A\subset H$ of a cosemisimple Hopf algebra $H$ is a direct summand of $H$ as an $A$-bimodule, and then concludes using \cite[Proposition 1.4, Proposition 1.6]{chi14}, results that are obtained by a discussion involving an important number of external references \cite{brwi,mes06,nus97,tak} and various technologies. It is hoped that the present direct and self-contained proof of Theorem \ref{thm:ffce} will provide   an easier access to the proof of Chirvasitu's Theorem.

I wish to thank B. Mesablishvili for useful comments and remarks on previous versions of this note, and S. Skryabin for pointing out his recent paper \cite{Skr}, which shows that the assumption ``$A$ is a direct summand in $H$ as an $A$-bimodule'' in Theorem \ref{thm:ffce}
cannot be weakened to ``$A$ is direct summand as a right (or left) $H$-module''.

\medskip

\noindent
\textbf{Notations and conventions.}
We work over a fixed field $k$, and  assume that the reader is familiar with the theory of Hopf algebras as e.g. in \cite{mon}.
If $H$ is a Hopf algebra, as usual, $\Delta$, $\varepsilon$ and $S$ stand respectively for the comultiplication, counit and antipode of $H$. We use Sweedler's notations in the standard way. 

The category of left $A$-modules over an algebra is denoted $\lA$, the category of left $C$-comodules over a coalgebra is denoted ${^C\mathcal M}$, etc...  

As usual, we say that a right $A$-module $M$ is  flat if the functor $M\otimes_A- : \lA \to {_k\mathcal M}$ is  exact, which amounts to say map $M\otimes_A-$ preserves injective maps (monomorphisms), and that $M$ is faithfully flat if it is flat and $M\otimes_A-$ creates exact sequences as well. Left (faithfully) flat $A$-modules are defined similarly. We also say that an algebra extension $A\subset B$ is right or left (faithfully) flat is $B$ is (faithfully) flat  as a right or left $A$-module.  

If $A\subset B$ is an algebra extension, then $A$ is direct summand in $B$ as a right $A$-module if and only there exists a right $A$-linear map $E : B\to A$ such that $E_{|A}={\rm id}_A$, and we call such a map $E$ is a right conditional expectation for the extension $A\subset B$. The notion of left conditional expectation is defined similarly, and a bimodule conditional expectation is an $A$-bimodule map $E : B \to A$ such that  $E_{|A}={\rm id}_A$.

\section{Proof Of Theorem \ref{thm:ffce}\label{sec:rhff}}

\subsection{Preliminary set-up} We begin by fixing a number of notation and constructions, which will run throughout the section. All this material can be found in \cite{tak}.

Let $H$ be a Hopf algebra and let $A\subset H$ be a right coideal subalgebra, which means that $A$ is subalgebra of $H$ such that $\Delta(A)\subset A\otimes H$. Let $A^+ = {\rm Ker}(\varepsilon) \cap A$ and consider $HA^+$, the left ideal of $H$ generated by $A^+$. It is an immediate verification  that $HA^+$ is a coideal in $H$ ($\Delta(HA^+)\subset HA^+\otimes H + H \otimes HA^+$ and $\varepsilon(HA^+)=0$), so we can form the quotient coalgebra $C=H/HA^+$ together with the canonical surjection : $\pi : H \to C=H/HA^+$. The coalgebra $C$ has as well a left $H$-module structure induced by $\pi$, so that $C$ is a left $H$-module coalgebra. We therefore consider the category of (relative) Hopf modules $\lHlC$, whose objects are the left $H$-modules and left $C$-comodules $X$ such that the coaction $\alpha_X : X \to C \otimes X$ is left $H$-linear, i.e. in Sweedler notation, we have for any $h\in H$ and $x\in X$,
$$(h.x)_{(-1)} \otimes (h.x)_{(0)} = h_{(1)}.x_{(-1)} \otimes h_{(2)}.x_{(0)}$$ 
For a left $A$-module $M$, the induced $H$-module $H\otimes_A M$ has a left $C$-comodule structure given by $(h\otimes_A m)_{(-1)} \otimes (h\otimes_A m)_{(0)}= \pi(h_{(1)})\otimes  h_{(2)}\otimes_A m$ making it into an object of $\lHlC$. This defines the induction functor
\begin{align*}
 L = H\otimes_A- : \lA &\longrightarrow \lHlC \\
M & \longmapsto H\otimes_A M
\end{align*}
For an object $X$ in $\lHlC$, let 
$$^{{\rm co} C}X = \{x \in X \ | \ x_{(-1)}\otimes x_{(0)} = \pi(1) \otimes x\}$$ 
It is immediate to check that $^{{\rm co} C}X\subset X$ is a sub-$A$-module and this defines a functor
\begin{align*}
 R = {^{{\rm co} C}}(-) : \lHlC &\longrightarrow \lA \\
X & \longmapsto {^{{\rm co} C}X}
\end{align*}
which is right adjoint to $L$. We therefore have a pair of adjoint functors
\begin{align} 
(L,R) : \lA \rightleftarrows \lHlC \label{eqn:pair}\end{align} 
whose respective unit and counit are given by
\begin{alignat}{3}
 \eta_M : M &\longrightarrow {^{{\rm co}  C}(H\otimes_A M)} \quad \quad \mu_X : &H\otimes_A {^{{\rm co}C} X} &\longrightarrow X \label{eqn:unicouni} \\
m & \longmapsto 1_H \otimes_A m   &h\otimes x &\longmapsto h.x \nonumber
\end{alignat}

%\subsection{Statement of the main result} 
We have now the necessary material to state the following result, which is \cite[Proposition 1.6]{chi14}.

\begin{theorem}\label{thm:equff}
	Let $H$ be a Hopf algebra, let $A\subset H$ be a right coideal subalgebra and let $C=H/HA^+$ be the corresponding quotient coalgebra. The following assertions are equivalent.
	\begin{enumerate}
	\item The induction functor ${_A\mathcal M}\longrightarrow\lHlC$ is an equivalence of categories.
	\item The extension $A\subset H$ is right faithfully flat.
	\item The above unit and counit morphisms (\ref{eqn:unicouni})  are isomorphisms.
\end{enumerate}
	\end{theorem}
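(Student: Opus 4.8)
The plan is to prove the cycle $(2)\Rightarrow(3)\Rightarrow(1)\Rightarrow(2)$, taking $(2)\Rightarrow(3)$ from the cited fact that faithfully flat extensions are faithfully flatish. The conceptual backbone is that condition $(3)$ is \emph{exactly} the statement that the unit $\eta$ of the adjunction \eqref{eqn:pair} is a natural isomorphism. To see this I would first prove, for every left $A$-module $M$, the identity
\[
{}^{\co C}(H\otimes_A M)=X(M),
\]
where $X(M)\subseteq H\otimes_A M$ is the equalizer appearing in Definition \ref{def:ffish} (with $B=H$). Since the image of $\eta_M$ is $1_H\otimes_A M\subseteq X(M)\subseteq{}^{\co C}(H\otimes_A M)$, this identity converts faithful flatishness for $M$ into the bijectivity of $\eta_M$, and hence $(3)\Leftrightarrow[\eta\text{ is a natural isomorphism}]$; in particular $(1)\Rightarrow(3)$ is then immediate, as an equivalence of categories forces $\eta$ to be invertible.

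The inclusion $X(M)\subseteq{}^{\co C}(H\otimes_A M)$ is formal: the assignment $g\otimes_A h\otimes_A m\mapsto \pi(g_{(1)})\otimes g_{(2)}h\otimes_A m$ is a well-defined map $H\otimes_A H\otimes_A M\to C\otimes(H\otimes_A M)$ (well-definedness uses $\pi(ha)=\varepsilon(a)\pi(h)$ for $a\in A$), and it carries the two maps defining $X(M)$ to the coaction $\alpha_X$ and to $z\mapsto \pi(1)\otimes z$ respectively. The reverse inclusion is the technical heart of this step and is where the Hopf structure enters. Given $z=\sum_i h_i\otimes_A m_i\in{}^{\co C}(H\otimes_A M)$, applying $-\otimes(H\otimes_A M)$ to $HA^+\hookrightarrow H\twoheadrightarrow C$ expresses the coinvariance condition as
\[
\sum_i h_{i(1)}\otimes(h_{i(2)}\otimes_A m_i)-1_H\otimes z\in HA^+\otimes(H\otimes_A M).
\]
Applying the antipode-twisted map $\Lambda(g\otimes v)=g_{(1)}\otimes_A S(g_{(2)})\cdot v$, where $\cdot$ denotes the left $H$-action on $H\otimes_A M$, then yields $\sum_i h_i\otimes_A 1_H\otimes_A m_i=\sum_i 1_H\otimes_A h_i\otimes_A m_i$, that is $z\in X(M)$. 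The key verifications are that $\Lambda$ annihilates $HA^+\otimes(H\otimes_A M)$ (moving $a_{(1)}\in A$ across the tensor and using $a_{(1)}S(a_{(2)})=\varepsilon(a)1$ together with $\varepsilon|_{A^+}=0$) and that it sends the two displayed terms to the two sides of the equalizer condition.

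For $(1)\Rightarrow(2)$ I would argue at the level of the forgetful functor $U\colon\lHlC\to{}_k\mathcal{M}$. Kernels and cokernels in $\lHlC$ are computed in $\lH$ and remain $C$-subcomodules, the tensor product over the field $k$ being exact, so $U$ is exact and faithful. Since an equivalence is itself exact and faithful, the composite $U\circ L=H\otimes_A-$ is exact, giving flatness of $H$; and faithfulness of $L$ forces $H\otimes_A N=0\Rightarrow N=0$, which together with flatness is precisely faithful flatness.

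The main obstacle is the remaining implication $(3)\Rightarrow(1)$: granting that $\eta$ is a natural isomorphism, I must show the counit $\mu_X\colon H\otimes_A{}^{\co C}X\to X$ is a natural isomorphism, i.e. the full structure theorem for $\lHlC$. On induced objects $X=H\otimes_A N$ this is immediate from the triangle identity $\mu_{LN}\circ L(\eta_N)=\id$, since $L(\eta_N)$ is then invertible. For a general $X$ I would present it as the (reflexive) coequalizer of its canonical resolution $LRLRX\rightrightarrows LRX$, which becomes split after applying $R$ by means of the coaction $\alpha_X$, and argue that $L$ preserves this coequalizer, so that $\mu_X$ is assembled from the isomorphisms already obtained on induced objects; concretely this is a Beck-monadicity argument for $R={}^{\co C}(-)$. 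This is the genuinely delicate point, precisely because $C$ carries no antipode and the usual coinvariant projection $x\mapsto S(x_{(-1)})x_{(0)}$ is therefore unavailable, so it must be replaced by the splitting furnished by $\alpha_X$ together with the isomorphism $\eta$; this is the step closest in spirit to \cite[Theorem 3.7]{sch90}.
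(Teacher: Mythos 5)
Your treatment of the unit is correct and is, modulo packaging, the paper's own argument: your antipode-twisted map $\Lambda(g\otimes v)=g_{(1)}\otimes_A S(g_{(2)})\cdot v$ is exactly what makes the inverse of the canonical map $\kappa_X\colon H\otimes_A X\to C\otimes X$ of Proposition \ref{prop:cano} well defined, and your identification ${}^{\co C}(H\otimes_A M)=X(M)$ is the content of Proposition \ref{prop:unitiso} (the paper only proves the direction it needs, namely (3) implies unit iso; your unconditional equality is correct and slightly sharper, giving (3) $\Leftrightarrow$ unit iso). Your implication (1) $\Rightarrow$ (2) also coincides with the paper's.

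The genuine gap is in (3) $\Rightarrow$ (1), the bijectivity of the counit on non-induced objects, which is the technical heart of the theorem and not a routine verification. You propose to ``present $X$ as the (reflexive) coequalizer of $LRLRX\rightrightarrows LRX$'', but this exactness of the canonical presentation is not something one can assume: for a general adjunction it fails (take $R$ to be a functor killing some nonzero object), and here it is essentially equivalent to the statement being proved. To get it from Beck's theorem you would need (a) $R={}^{\co C}(-)$ conservative and (b) that $\lHlC$ has, and $R$ preserves, coequalizers of $R$-split reflexive pairs; neither is formal ($R$ is only left exact a priori), your outline verifies neither, and hypothesis (3) enters nowhere in your sketch of this step. (Also, your remark that ``$L$ preserves this coequalizer'' is automatic, $L$ being a left adjoint; the issue is what $R$ does.) The paper resolves this as follows: (i) it establishes the canonical isomorphisms $\kappa_X$ and $\widetilde{\kappa}_X$ for an \emph{arbitrary} object $X$ of $\lHlC$ (Propositions \ref{prop:cano} and \ref{prop:canovar}), with no flatness hypothesis, this being where the antipode does its work; (ii) it deduces unconditionally that every $\mu_X$ is surjective and that $R$ is faithful (Proposition \ref{prop:counitiso}); (iii) via the elementary Lemma \ref{lem:adj}, it combines the unit isomorphism (your step) with the faithfulness of $R$ to conclude that $L=H\otimes_A-$ preserves monomorphisms, i.e.\ that $H$ is \emph{flat} as a right $A$-module; (iv) flatness then upgrades the comparison diagram to one with short exact rows, and a diagram chase gives that $\mu_X$ is an isomorphism. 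Steps (ii)--(iv) are precisely what is missing from your proposal: without first extracting flatness of $H$ from the unit isomorphism, neither the Beck hypotheses nor any direct control of $\mu_X$ is available, so your final step is a missing idea rather than a deferred verification.
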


It is immediate that $(1)\Rightarrow (2)$ since an equivalence of categories is a faithfully exact functor and the exact sequences in $\lA$ and $\lHlC$ are precisely those that are exact in ${_k\mathcal M}$. It is clear that $(3)\Rightarrow (1)$. The arguments we develop to prove Theorem \ref{thm:ffce} will provide as well a proof of (2)$\Rightarrow$(3).

\subsection{The canonical isomorphisms} We will use some ``canonical'' isomorphisms, that we construct in this subsection. 

For a left $H$-module $X$, endow $C\otimes X$ with the tensor product left $H$-module structure and with the left $C$-comodule structure provided by the comultiplication of $C$. In this way $C\otimes X$ becomes an object of $\lHlC$ (in fact $C\otimes X$ is the image of $X$ by the right adjoint to the forgetful functor $\lHlC \to \lH$).

\begin{proposition}\label{prop:cano}
Let $A\subset H$ be a right coideal subalgebra and let $X$ be  left $H$-module. The canonical map
	\begin{align*}
		\kappa_X : H\otimes_A X &\longrightarrow C\otimes X \\
		h\otimes_A x &\longmapsto \pi(h_{(1)}) \otimes h_{(2)}.x
	\end{align*}
is an	isomorphism in the category $\lHlC$.
\end{proposition}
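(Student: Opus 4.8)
The plan is to produce an explicit two-sided inverse of $\kappa_X$ and to check separately that $\kappa_X$ is a morphism in $\lHlC$; since a bijective morphism of relative Hopf modules is automatically an isomorphism, these two facts together finish the proof.

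I would first confirm that $\kappa_X$ is well defined on the balanced tensor product, i.e. that $\kappa_X(ha\otimes_A x)=\kappa_X(h\otimes_A ax)$ for $a\in A$. Writing $a=\varepsilon(a)1+a^+$ with $a^+\in A^+$ and using $\Ker(\pi)=HA^+$ yields $\pi(h_{(1)}a_{(1)})=\varepsilon(a_{(1)})\pi(h_{(1)})$; here the right coideal hypothesis $\Delta(A)\subset A\otimes H$ ensures $a_{(1)}\in A$ so that this step is legitimate, and the counit identity $\varepsilon(a_{(1)})a_{(2)}=a$ then collapses the expression to $\pi(h_{(1)})\otimes h_{(2)}.(ax)$. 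That $\kappa_X$ is left $H$-linear follows at once from the $H$-linearity of $\pi$ and the multiplicativity of $\Delta$, and that it is left $C$-colinear follows from $\pi$ being a morphism of coalgebras; both are one-line Sweedler computations.

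The core of the argument is the construction of the inverse, for which I would take
\[
\lambda_X : C\otimes X \longrightarrow H\otimes_A X,\qquad \pi(h)\otimes x\longmapsto h_{(1)}\otimes_A S(h_{(2)}).x .
\]
The main obstacle is the well-definedness of $\lambda_X$, namely its independence of the lift $h\in H$ of a given element of $C$. By linearity this reduces to showing that $h\in HA^+$ forces $h_{(1)}\otimes_A S(h_{(2)}).x=0$. Evaluating on a generator $h=ga^+$ with $g\in H$, $a^+\in A^+$, I would expand $\Delta(h)=g_{(1)}a^+_{(1)}\otimes g_{(2)}a^+_{(2)}$, then use $a^+_{(1)}\in A$ to slide that factor across $\otimes_A$ so that it acts on $X$; after applying $S$ the relevant portion becomes $a^+_{(1)}S(a^+_{(2)})=\varepsilon(a^+)1=0$, the last equality because $a^+\in\Ker(\varepsilon)$. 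This single computation is where the three features of the situation — $A$ being a subalgebra, a coideal, and the presence of the antipode — all come into play, and it is the step I expect to require the most care.

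It then remains to verify that the two composites are identities. Each reduces, after a routine reindexing, to one form of the antipode axiom followed by the counit axiom: for instance $\lambda_X\kappa_X(h\otimes_A x)=h_{(1)}\otimes_A S(h_{(2)})h_{(3)}.x=h\otimes_A x$, and symmetrically $\kappa_X\lambda_X(\pi(h)\otimes x)=\pi(h_{(1)})\otimes h_{(2)}S(h_{(3)}).x=\pi(h)\otimes x$. Having established that $\kappa_X$ is a bijective morphism in $\lHlC$, I conclude that it is an isomorphism there.
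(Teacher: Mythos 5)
Your proof is correct and takes essentially the same route as the paper: the paper's proof consists precisely of exhibiting the inverse map $\pi(h)\otimes x\longmapsto h_{(1)}\otimes_A S(h_{(2)}).x$ and declaring the verifications a ``direct verification.'' Your write-up simply supplies those verifications in full (well-definedness over $\otimes_A$ via the coideal property, vanishing of the inverse on $HA^+$ via $a_{(1)}S(a_{(2)})=\varepsilon(a)1$, and the antipode/counit computations for the two composites), all of which are accurate.
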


\begin{proof}
It is a direct verification that $\kappa_X$ is a a morphism in $\lHlC$, and that 
\begin{align*}
 C \otimes X &\longrightarrow H\otimes_A X \\
\pi(h) \otimes x & \longmapsto h_{(1)} \otimes_A S(h_{(2)}).x
\end{align*}
is the inverse isomorphism.
\end{proof}

\subsection{The unit of the adjunction} We first analyse the unit of our adjunction, starting with a general observation.

\begin{proposition}\label{prop:flatish}
 Let $A\subset B$ be an algebra extension, let $M$ be  a left $A$-module $M$, and consider the map
	\begin{align*}
	\iota_M : M & \longrightarrow \biggl\{  \sum_i x_i \otimes_A m_i \in B \otimes_A M \ \big|  \ \sum_ix_i\otimes_A 1_B\otimes_A  m_i = \sum_i 1_B\otimes_A x_i\otimes_A  m_i\biggr\} \\
	m & \longmapsto 1_B\otimes_A m 
	\end{align*}
The $\iota_M$ is an isomorphism if one of the following conditions holds:
\begin{enumerate}
		\item $A\subset B$ is right faithfully flat;
		\item $A$ is a direct summand in $B$ as a right $A$-module.
	\end{enumerate}
\end{proposition}

\begin{proof}
 It is a well-known result  (see e.g. the second theorem of Section 13.1 in \cite{wat}) that $\iota_M$ is an isomorphism if $B$ is faithfully flat as a right $A$-module, that we do not reproduce here.

Let  $E : B \to A$ be a right conditional expectation. The right $A$-linearity of $E$ enables us to define, for any left $A$-module $M$,  the map \begin{align*} E_M : B\otimes_A M &\to M \\ x\otimes_A m &\mapsto E(x). m\end{align*}
 For simplicity denote   $X(M)$ the space on the right. Let us check that $E_{M|{X(M)}} : X(M)\to M$ is an inverse isomorphism to $\iota_M$. In one direction it is clear that $E_M \circ \iota_M={\rm id}_M$. To prove that $\iota_M \circ E_{M|X(M)} ={\rm id}_{X(M)}$, similarly to before, notice that the right $A$-linearity of $E$ enables us to define the map \begin{align*} E_M' : B\otimes_A B\otimes _A M &\to B\otimes_A M \\ x\otimes_A y\otimes_A m &\mapsto E(x)y\otimes_A m\end{align*}
For  $\sum_i x_i \otimes_A m_i \in X(M)$, we have 
$$\sum_i 1_B\otimes_A x_i\otimes_A  m_i=\sum_ix_i\otimes_A 1_B\otimes_A  m_i$$ and applying $E'_M$ yields
$$ \sum_i x_i\otimes_A  m_i = \sum_i E(x_i)\otimes_A  m_i =1_B\otimes_A\left(\sum_i E(x_i). m_i\right) = \iota_M \circ E_M\left(\sum_i x_i\otimes_A  m_i\right)$$ 
which concludes the proof.
\end{proof}

\begin{proposition}\label{prop:unitiso}
Let $A \subset H$ be a right coideal subalgebra.
Assume that $A\subset H$ is right faithfully flat or that 
		$A$ is a direct summand in $H$ as a right $A$-module. Then for any left $A$-module $M$, the unit map
	\begin{align*}
	\eta_M : M \longrightarrow {^{{\rm co}  C}(H\otimes_A M)}
	\end{align*}
	is an isomorphism.
\end{proposition}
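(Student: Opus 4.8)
The plan is to show that, under faithful flatishness, the unit $\eta_M$ is precisely the comparison map of Definition \ref{def:ffish} for the extension $A\subset H$, once its target has been identified correctly. First observe that $\eta_M$ does land in $^{{\rm co}C}(H\otimes_A M)$, since $\pi(1_{(1)})\otimes 1_{(2)}\otimes_A m=\pi(1)\otimes 1_H\otimes_A m$, and that by definition $^{{\rm co}C}(H\otimes_A M)=\Ker(\delta_{H\otimes_A M})$. The strategy is therefore to identify $\Ker(\delta_{H\otimes_A M})$ with the equalizer subspace
$$X(M)=\Bigl\{\textstyle\sum_i h_i\otimes_A m_i\in H\otimes_A M \ \big| \ \sum_i h_i\otimes_A 1_H\otimes_A m_i=\sum_i 1_H\otimes_A h_i\otimes_A m_i\Bigr\}$$
appearing in Definition \ref{def:ffish}, in such a way that $\eta_M$ becomes the map $m\mapsto 1_H\otimes_A m$ of that definition.

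The key tool is Proposition \ref{prop:cano} applied to the left $H$-module $X=H\otimes_A M$, where $H$ acts by left multiplication on the first tensorand. This yields an isomorphism $\kappa_{H\otimes_A M}:H\otimes_A(H\otimes_A M)\to C\otimes(H\otimes_A M)$ with explicit inverse $\pi(g)\otimes(h'\otimes_A m)\mapsto g_{(1)}\otimes_A S(g_{(2)})h'\otimes_A m$. I would then compose $\delta_{H\otimes_A M}$ with $\kappa_{H\otimes_A M}^{-1}$ and compute the two resulting terms separately. For the coaction term, using that the coaction on $H\otimes_A M$ is $h\otimes_A m\mapsto \pi(h_{(1)})\otimes h_{(2)}\otimes_A m$, the inverse produces $h_{(1)}\otimes_A S(h_{(2)})h_{(3)}\otimes_A m$, which collapses to $h\otimes_A 1_H\otimes_A m$ by the antipode axiom $S(h_{(2)})h_{(3)}=\varepsilon(h_{(2)})1_H$. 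For the term $h\otimes_A m\mapsto\pi(1)\otimes(h\otimes_A m)$, the inverse produces $1_H\otimes_A h\otimes_A m$. Hence $\kappa_{H\otimes_A M}^{-1}\circ\delta_{H\otimes_A M}$ is the map $h\otimes_A m\mapsto h\otimes_A 1_H\otimes_A m-1_H\otimes_A h\otimes_A m$.

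Since $\kappa_{H\otimes_A M}^{-1}$ is injective, $\Ker(\delta_{H\otimes_A M})=\Ker(\kappa_{H\otimes_A M}^{-1}\circ\delta_{H\otimes_A M})$, and the latter is exactly the equalizer $X(M)$ by the displayed formula. Thus $^{{\rm co}C}(H\otimes_A M)=X(M)$ as submodules of $H\otimes_A M$, and under this identification $\eta_M$ is literally the map $m\mapsto 1_H\otimes_A m$ of Definition \ref{def:ffish} (with $B=H$). The faithful flatishness hypothesis then asserts precisely that this map is an isomorphism, which finishes the proof.

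I expect the only delicate point to be the Sweedler-index bookkeeping in the second paragraph: the coaction already applies $\Delta$ once and $\kappa_{H\otimes_A M}^{-1}$ applies it a second time, so one must track three legs of $h$ and invoke coassociativity before using the antipode axiom. One should also double-check that the $H$-module structure used in Proposition \ref{prop:cano} is indeed left multiplication on the first factor, which is the structure implicit in the induction functor $L$.
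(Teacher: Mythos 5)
Your proof is correct and takes essentially the same route as the paper's: both hinge on applying the canonical isomorphism of Proposition \ref{prop:cano} to $X=H\otimes_A M$ in order to identify the coinvariants ${^{{\rm co}C}(H\otimes_A M)}$ with the equalizer $X(M)$ of Definition \ref{def:ffish}, and then invoke the faithful flatishness hypothesis. The only difference is presentational: you compute $\kappa_{H\otimes_A M}^{-1}\circ\delta_{H\otimes_A M}$ to obtain the equality of subspaces outright, whereas the paper applies $\kappa_{H\otimes_A M}$ to coinvariant elements and uses the hypothesis to construct an explicit inverse to $\eta_M$; the underlying computation is the same.
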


\begin{proof}
We consider the canonical isomorphism
\begin{align*}
\kappa_M'=\kappa_{H\otimes_A M} : H\otimes_A (H\otimes_A M)&\longrightarrow C\otimes (H\otimes_A M) \\
h\otimes_A h'\otimes_A m &\longmapsto \pi(h_{(1)}) \otimes h_{(2)}h' \otimes_A m
\end{align*}
	 from Proposition \ref{prop:cano}. For $\sum_i h_i\otimes_A m_i \in  {^{{\rm co}  C}(H\otimes_A M)}$, we have 
	 $$\kappa_M'\left(\sum_i h_i\otimes_A 1_H \otimes_A m_i\right) = \sum_i \pi(1)\otimes h_i\otimes_A m_i=\kappa'_M\left(\sum_i 1_H\otimes_A h_i\otimes_A m_i\right)$$
	 and the  injectivity of $\kappa_M'$ gives
	 $$\sum_i h_i\otimes_A 1_H \otimes_A m_i=\sum_i 1_H\otimes_A h_i\otimes_A m_i$$
	 Our assumption then ensures, by Proposition \ref{prop:flatish}, the existence of a unique $m \in M$ such that $\sum_i h_i\otimes_A m_i = 1_H\otimes_Am$. This therefore defines a map 
	 ${^{{\rm co}  C}(H\otimes_A M)} \to M$, which is clearly an inverse to $\eta_M$.
	\end{proof}

\begin{remark}
If $A$ is a direct summand in $H$ as a right $A$-module, Proposition \ref{prop:unitiso} ensures in particular that $^{{\rm co} C}H =A$. Hence in view of Proposition \ref{prop:cano}, we see that $A\subset H$ is coalgebra Galois extension over $C$. Once this is noticed, the shortest way to obtain a proof of Theorem \ref{thm:ffce} is certainly to invoke \cite[Proposition 4.4]{brz}. I thank B. Mesablishvili for pointing out coalgebra Galois extension in this context. 
\end{remark}

\subsection{The counit of the adjunction} We now analyse the counit of our adjunction $(L,R)$. We begin with a lemma, in which we use the following notation: if $X$ is an object of $\lHlC$,  we denote $i_X : {^{{\rm co} C}X}\to X$ the natural inclusion map.

\begin{lemma}\label{lem:proj}
	Let $A\subset H$ be a right coideal subalgebra and let $X$ be an object of  $\lHlC$. Assume that $A$ is a direct summand in $H$ as a right $A$-module, and let $E : H \to A$ be a right conditional expectation. Then the map $$p_X = (E\otimes_A{\rm id}_X)\circ \kappa_X^{-1}\circ\alpha_X : X \to X$$ is a projection of $X$ onto ${^{{\rm co}C} X}$. If moreover $E$ is an $A$-bimodule conditional expectation, then $p_X : X \to {^{{\rm co}C} X}$ is $A$-linear, and hence the map ${\rm id}_H\otimes_A i_X : H\otimes_A {^{{\rm co}C} X} \to H\otimes_A X$ is injective.
\end{lemma}

\begin{proof}
The above map $p_X$ is well-defined since $E$ is right $A$-linear, is an $A$-bimodule map as soon as $E$ is,  and for $x\in  {^{{\rm co} C}X}$, it is clear that $p_X(x)=x$, since $E(1_H)=1_A$. Thus one just has to check that $p_X$ has values into ${^{{\rm co} C}X}$, which follows from the commutativity of the following diagram:
$$\xymatrix@C=2.2cm@R=0.8cm{ X \ar[r]^{\alpha_X} \ar[d]^{\alpha_X}& C\otimes X \ar[r]^{\kappa_X^{-1}} \ar[d]^{{\rm id}_C\otimes \alpha_X} & H\otimes_AX \ar[r]^{E\otimes_A {\rm id}_X} \ar[d]^{{\rm id}_H\otimes_A\alpha_X}& X  \ar[d]^{\alpha_X}  \\
C\otimes X \ar[r]^{\Delta_C  \otimes {\rm id}_X} \ar[rd]_{\kappa_X^{-1}} & C\otimes (C \otimes X) \ar[r]^{\kappa_{C\otimes X}^{-1}} & H\otimes_A(C\otimes X) \ar[r]^-{E\otimes_A \otimes {\rm id}_C \otimes _X} & C\otimes X \\
 & H\otimes_A X \ar[ru]_{\nu_X}\ & & 
}$$ 
where $\nu_X : H\otimes_A X : H\otimes A (C\otimes X)$ is defined by $\nu_X(h \otimes_A x) = h\otimes_A\pi(1)\otimes x$.

If $E$ is an $A$-bimodule map, then $p_X$ is left $A$-linear and therefore ${\rm id}_H \otimes_A p_X$ provides a retraction to ${\rm id}_H\otimes_A i_X$, which gives the last statement.
\end{proof}

\begin{proposition}\label{prop:mu}
		Let $A\subset H$ be a right coideal subalgebra and let $X$ be an object of  $\lHlC$. Consider the following assertions:
\begin{enumerate}[label=(\alph*)]
 \item  ${\rm id}_H\otimes_A i_X : H\otimes_A {^{{\rm co} C} X} \to  H\otimes_A X$ is injective;
\item $\mu_X :  H\otimes_A {^{{\rm co} C} X} \to X$ is injective;
\item $\mu_X :  H\otimes_A {^{{\rm co} C} X} \to X$ is surjective.
\end{enumerate}
Then we have $(a)\iff (b) \Longrightarrow (c)$. These assertions hold true if one of the following conditions is satisfied:
\begin{enumerate}
 \item $H$ is flat as a right $A$-module;
\item $H$ is a direct summand in $B$ as an $A$-bimodule.
\end{enumerate}		
\end{proposition}

\begin{proof}
Consider the map $\delta : X \to C\otimes X$ defined by $\delta(x) =x_{(-1)}\otimes x_{(0)} -\pi(1)\otimes x$. This map is $A$-linear and the sequence of $A$-modules $0 \to {^{{\rm co}C}X} \overset{i_X} \to X \overset{\delta} \to C\otimes X$ is exact. Applying $H\otimes_A-$ yields the  sequence 
$$ H\otimes_A {^{{\rm co}C}X} \overset{{\rm id}_H\otimes_A i_X} \longrightarrow H\otimes_AX \overset{{\rm id}_H\otimes_A \delta} \longrightarrow H\otimes_A (C\otimes X)$$ 
that fits in the commutative diagram
$$\xymatrix@C=1.5cm@R=1cm{ & H\otimes_A {^{{\rm co}C}X} \ar[r]^{{\rm id}_H\otimes_A i} \ar[d]^{\mu_X} & H\otimes_A  X \ar[r]^{{\rm id}_H\otimes_A \delta} \ar[d]^{\kappa_X} & H\otimes_A (C\otimes X)  \ar[d]^{\kappa_{C\otimes X}}  \\
0  \ar[r] & X \ar[r]^{\alpha_X} & C\otimes X \ar[r]^{\nabla} & C\otimes (C\otimes X)
}$$
where $\nabla :  C\otimes X \to C\otimes  (C \otimes X)$ is defined by
$$\nabla(\pi(h) \otimes x)  = \pi(h)\otimes x_{(-1)}\otimes x_{(0)} - \pi(h_{(1)})\otimes \pi(h_{(2)})\otimes x$$ 
Since $\kappa_X^{-1} \circ \alpha_X$ is injective, we get $(a)\iff (b)$.

Assume that (a) holds. Then the upper sequence in the above diagram is exact (while the lower row is exact by construction), and it is a simple diagram chasing to check that $\mu_X$ is surjective.

If (1) holds, then (a) holds by the definition of flatness, and if (2) holds, Lemma \ref{lem:proj} ensures that (a) holds.	
\end{proof}

The proof  of right faithful flatness in  Theorem \ref{thm:ffce}  is now immediate: if $A$ is a direct summand in $H$ as an $A$-bimodule, Proposition \ref{prop:unitiso} ensures that the unit of the adjunction $(L,R)$ is an isomorphism, and Proposition \ref{prop:mu} ensures that the counit is an isomorphism as well, so $B\subset H$ is right faithfully flat by (3)$\Rightarrow$(2) in Theorem \ref{thm:equff}.

Under the assumption that $A$ is a direct summand in $H$ as an $A$-bimodule,  left faithful flatness is shown similarly by considering the right $H$-module quotient coalgebra $D = H /A^+H$, and the category $\mathcal M_H^D$, we do not write the details.

Notice as well  that Proposition \ref{prop:unitiso}  and Proposition \ref{prop:mu} combined together immediately show (2)$\Rightarrow$(3) in Theorem \ref{thm:equff}.

\section{Concluding remark}

In the recent paper \cite{Skr}, S. Skryabin shows that for any finite-dimensional nonsemisimple Hopf algebra $A$ there exists a Hopf algebra $H$ containing $A$ as a Hopf subalgebra such that $H$ is not flat over A. This shows that the assumption 
``$A$ is a direct summand in $H$ as an $A$-bimodule'' in Theorem \ref{thm:ffce}
cannot be weakened to ``$A$ is direct summand as a right (or left) $H$-module'', because since a finite-dimensional Hopf algebra is self-injective (projective modules are injective),   $A$ is direct summand as a right (or left) $H$-module.

We conclude by summarizing what is known about flatness when $A$ is direct summand as a right $H$-module.

\begin{proposition}
	Let $A\subset H$ be a right coideal subalgebra. Assume that $A$ is a direct summand in $H$ as a right $A$-module. Then the following assertions are equivalent:
	\begin{enumerate}
		\item $H$ is faithfully flat as a right $A$-module;
		\item $H$ is flat as a right $A$-module;
		\item For any object   $X$ of  $\lHlC$, the map  ${\rm id}_H\otimes_A i_X : H\otimes_A {^{{\rm co} C} X} \to  H\otimes_A X$ is injective;
		\item  For any object   $X$ of  $\lHlC$, the map $\mu_X :  H\otimes_H {^{{\rm co} C} X} \to X$ is surjective.
	\end{enumerate}
\end{proposition}

\begin{proof}
	By definition (1)$\Rightarrow$(2) and (2)$\Rightarrow$(3),  while (3)$\Rightarrow(4)$ follows from Proposition \ref{prop:mu}. Assume that (4) holds. Since $A$ is a direct summand in $B$ as a right $A$-module, by Proposition \ref{prop:unitiso} we are in the situation of a pair of adjoint functors $(L,R)$ whose unit is an isomorphism and counit is an epimorphism: it is then easy to check that $R$ faithful, and that the counit is a monomorphism as well, so that $L$ and $R$ are inverse equivalences (since we are dealing with categories in which morphisms that are both monomorphisms and epimorphisms are isomorphisms). Hence $H$ is faithfully flat as a right $A$-module.
\end{proof}

\end{document}